\newcolumntype{C}{>{$}c<{$}} 
\definecolor{uququq}{rgb}{0.25,0.25,0.25}
\newtheorem{thm}{Theorem}[section]
\newtheorem{cor}[thm]{Corollary}
\theoremstyle{definition}
\theoremstyle{definition}
\theoremstyle{definition}
\newcommand{\Z}{\mathbb{Z}}
\newcommand{\F}{\mathbb{F}}
\newcommand{\comment}[1]{}
\numberwithin{equation}{section}
\begin{document}
\date{}
\title{Partitioned difference families:\\the storm has not yet passed}
\author{Marco Buratti \thanks{Dipartimento di Matematica e Informatica, Universit\`a di Perugia, via Vanvitelli 1, Italy. Email: buratti@dmi.unipg.it}\\
\\
Dieter Jungnickel
\thanks{Mathematical Institute, University of Augsburg, 86135 Augsburg, Germany.
Email: jungnickel@math.uni-augsburg.de}}
\date{\today}
\maketitle
\begin{abstract}
Two years ago, we alarmed the scientific community about the large number of bad papers in the literature on {\it zero difference balanced functions},
where direct proofs of seemingly new results are presented in an unnecessarily lengthy and convoluted way.
Indeed, these results had been proved long before and very easily in terms of difference families.

In spite of our report, papers of the same kind continue to proliferate. Regrettably, a  further attempt to put the topic in order seems unavoidable. 
While some authors now follow our recommendation of using the terminology of {\it partitioned difference families},  
their methods are still the same and their results are often trivial or even wrong.
In this note, we show how a very recent paper of this type can be easily dealt with.
\end{abstract}

\section{Introduction}

We recall that a collection $\cal F$ of subsets ({\it blocks}) of an additive group $G$ is a {\it difference family} (DF) of index $\lambda$ if the list of differences from $\cal F$, 
that is the multiset $\Delta{\cal F}=\{x-y \ | \ (x,y)\in B\times B; B\in{\cal F}\}$, covers every non-zero element of $G$ exactly $\lambda$ times. 
If $G$ has order $v$ and $K$ is the multiset of all the block-sizes, then one says that ${\cal F}$ is a $(v,K,\lambda)$-DF in $G$. 
If all blocks have  size $k$, the DF is said to be {\it uniform}, and one writes $(v,k,\lambda)$ rather than $(v,K,\lambda)$.
If we have only one block, then this block is said to be a $(v,k,\lambda)$ {\it difference set} (DS). 
For general background on uniform DFs and DSs we refer to \cite{AB} and \cite{DPS}, respectively. 
A DF whose blocks are pairwise disjoint is said to be {\it disjoint} (DDF), and it is {\it partitioned} (PDF) if the blocks partition $G$. 
It is evident that every DDF can be extended to a PDF by adding all possible singletons $\{g\}$ which are not contained in one of its blocks. 
From the design theory perspective, the PDFs having one block of size $k$ and all  other blocks  of size $k+1$ are of particular interest since they are equivalent to
{\it resolvable Steiner $2$-designs} with an automorphism group acting sharply transitively on all but one point (see \cite{BYW}).
 
 The notion of a PDF was introduced in \cite{DY} in view of its application to {\it optimal constant composition codes}.
 Subsequently, the equivalent notion of a {\it zero difference balanced function} (ZDBF) has been considered, and this  probably caused some confusion. 
 In \cite{B} it has been shown that the most celebrated results on ZDBFs were known since the 90's and can be immediately deduced from the following much more general result:
 
 \begin{thm}\label{Buratti}{\rm\cite{B}}
 Let $A$ be a group of automorphisms of order $k$ of a group $G$ of order $v$. If the action of $A$ on $G\setminus\{0\}$ 
 is semiregular\footnote{That is, for $\alpha\in A$  and $g\in G\setminus\{0\}$ we have $\alpha(g)=g$  if and only if $\alpha=id_G$.}, 
 then the set $\cal F$ of $A$-orbits on  $G\setminus\{0\}$ is a $(v,k,k-1)$-DDF in $G$.
 Moreover, if $G$ is abelian and $vk$ is odd, $\cal F$ can be split into two $(v,k,{k-1\over2})$-DDFs. \qed
 \end{thm}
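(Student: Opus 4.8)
The plan is to handle the two assertions separately, each by a short orbit-counting argument.

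For the first assertion: since the action of $A$ on $G\setminus\{0\}$ is semiregular, every point-stabilizer is trivial, so by the orbit--stabilizer theorem each $A$-orbit on $G\setminus\{0\}$ has size exactly $|A|=k$; as these orbits partition $G\setminus\{0\}$, the family $\mathcal F$ consists of pairwise disjoint $k$-subsets, and only the difference condition remains. Fix a nonzero $g\in G$ and count the triples $(B,x,y)$ with $B\in\mathcal F$, $x,y\in B$ and $x-y=g$. Two elements $x,y$ lie in a common block exactly when $y=\delta(x)$ for some $\delta\in A$, and such a $\delta$ is unique, since $\delta(x)=\delta'(x)$ with $x\neq 0$ forces $\delta^{-1}\delta'=\mathrm{id}_G$ by semiregularity; hence the count equals $\#\{(x,\delta)\in (G\setminus\{0\})\times A : x-\delta(x)=g\}$. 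For $\delta=\mathrm{id}_G$ there is no solution, while for each $\delta\neq\mathrm{id}_G$ the endomorphism $\mathrm{id}_G-\delta$ of $G$ has trivial kernel (again by semiregularity), hence is an automorphism of $G$, contributing exactly one (necessarily nonzero) solution $x$. This gives $k-1$ triples, so $\mathcal F$ is a $(v,k,k-1)$-DDF.

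For the second assertion, observe that $vk$ odd means $v$ and $k$ are odd, so $(k-1)/2$ is an integer and $G$ has no $2$-torsion. Since $G$ is abelian, negation $\sigma\colon x\mapsto -x$ is an automorphism of $G$ commuting with every $\alpha\in A$, so $\sigma$ sends the orbit $Ax$ to the orbit $A(-x)$ and thus permutes $\mathcal F$. This permutation has no fixed block: $A(-x)=Ax$ would give $-x=\alpha(x)$ for some $\alpha\in A$, hence $x=\alpha^2(x)$, hence $\alpha^2=\mathrm{id}_G$ (semiregularity, $x\neq0$), hence $\alpha=\mathrm{id}_G$ since $|A|=k$ is odd, hence $2x=0$ and so $x=0$, a contradiction. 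Therefore $\mathcal F$ splits into pairs $\{B,-B\}$; put one block of each pair into $\mathcal F_1$ and let $\mathcal F_2=\{-B : B\in\mathcal F_1\}$, so $\mathcal F=\mathcal F_1\sqcup\mathcal F_2$ with both subfamilies disjoint. Finally, each $\Delta B$ is invariant under negation (swapping $x,y$ negates $x-y$), so $\Delta\mathcal F_1$ is negation-invariant; combined with $\Delta\mathcal F_2=-\Delta\mathcal F_1$ this shows that, for every nonzero $g$, the multiplicity of $g$ in $\Delta\mathcal F=\Delta\mathcal F_1+\Delta\mathcal F_2$ is twice its multiplicity in $\Delta\mathcal F_1$; by the first assertion that total is $k-1$, so $\mathcal F_1$, and likewise $\mathcal F_2$, is a $(v,k,(k-1)/2)$-DDF.

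Nothing here is deep; the one point that needs care --- the crux of the argument --- is verifying that $\sigma$ has no fixed block, and that is precisely where the hypotheses ``$k$ odd'' and ``$v$ odd'' are used. Everything else is direct counting using semiregularity.
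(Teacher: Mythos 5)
The paper does not actually prove this theorem --- it is quoted from \cite{B} with a \textit{qed} box --- so there is no in-paper argument to compare against; judged on its own, your proof is correct for abelian $G$ and is essentially the standard one. The orbit--stabilizer step, the bijection between ordered within-block pairs representing $g$ and pairs $(x,\delta)$ with $x-\delta(x)=g$, the resulting count of $k-1$, and, for the second assertion, the observation that negation permutes the blocks without fixed blocks (which is indeed exactly where the hypotheses ``$k$ odd'' and ``$v$ odd'' are consumed) together with $\Delta(-B)=\Delta B$, are all sound.

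The one point that needs repair: the first assertion is stated for an arbitrary group $G$ of order $v$, not necessarily abelian, and your phrase ``the endomorphism $\mathrm{id}_G-\delta$ of $G$'' silently assumes commutativity; for non-abelian $G$ the map $x\mapsto x-\delta(x)$ is not a homomorphism, so ``trivial kernel, hence automorphism'' is not available. The conclusion survives by arguing injectivity directly: if $x-\delta(x)=y-\delta(y)$, then $-y+x=-\delta(y)+\delta(x)=\delta(-y+x)$, so semiregularity forces $-y+x=0$, i.e.\ $x=y$; injectivity on the finite set $G$ gives bijectivity, and each $\delta\neq\mathrm{id}_G$ still contributes exactly one (nonzero) solution. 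With that one-line substitution your argument proves the theorem in the generality in which it is stated. The second assertion correctly invokes abelianness where it is genuinely needed (negation being an automorphism), and the splitting into $\sigma$-paired blocks is exactly the mechanism underlying the explicit half-system construction the paper later writes out for $R_v$.
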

 
 One of the many consequences of the above result is the disjoint version of some old DFs due to Steven Furino.
In what follows, given an integer $v>1$, we shall denote the ring of order $v$ which is the direct product of finite fields by $R_v$.
By abuse of language, if we speak of a DF in $R_v$, we will mean a DF in the additive group of $R_v$.
 
 \begin{cor}\label{Furino} {\rm\cite{F}}
 If the prime divisors (resp. the maximal prime power divisors) of $v$ are all congruent to $1$ $($mod $k)$, then there exists a $(v,k,k-1)$-DDF in $\Z_v$ (resp. $R_v$).
 Under the additional hypothesis that $vk$ is odd,  there also exists a $(v,k,{k-1\over2})$-DDF in $\Z_v$ (resp. $R_v$). \qed
 \end{cor}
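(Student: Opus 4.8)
The plan is, in each of the two cases, to produce an explicit group $A$ of automorphisms of order $k$ of the relevant additive group acting semiregularly on the nonzero elements, and then to quote Theorem~\ref{Buratti} verbatim: its main assertion for the $(v,k,k-1)$-DDF, and its ``moreover'' clause for the split into two $(v,k,(k-1)/2)$-DDFs when $vk$ is odd (using that $\Z_v$ and $R_v$ are abelian). In other words, the corollary should amount to nothing more than a judicious choice of automorphism group.

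For $R_v=\F_{q_1}\times\cdots\times\F_{q_n}$, where the $q_i$ are the maximal prime power divisors of $v$ and hence $q_i\equiv 1\pmod k$, each cyclic group $\F_{q_i}^{*}$ has order divisible by $k$, so it contains an element $a_i$ of order \emph{exactly} $k$. I would take $A=\langle a\rangle$, where $a$ acts on $(R_v,+)$ as multiplication by $(a_1,\dots,a_n)$; since $a^{j}$ is the identity iff $a_i^{j}=1$ for all $i$, the order of $A$ is $\mathrm{lcm}(k,\dots,k)=k$. If $a^{j}\cdot g=g$ with $0\neq g=(g_1,\dots,g_n)$, then $(a_i^{j}-1)g_i=0$ in the field $\F_{q_i}$ for every $i$; choosing a coordinate with $g_i\neq 0$ forces $a_i^{j}=1$ and hence $k\mid j$, i.e. $a^{j}=\mathrm{id}$. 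Thus $A$ is semiregular on $R_v\setminus\{0\}$ and Theorem~\ref{Buratti} applies.

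For $\Z_v$ write $v=p_1^{e_1}\cdots p_n^{e_n}$ with every $p_i\equiv 1\pmod k$ and identify $\Z_v\cong\prod_i\Z_{p_i^{e_i}}$ by the Chinese Remainder Theorem. Each $\Z_{p_i^{e_i}}^{*}$ is cyclic of order $p_i^{e_i-1}(p_i-1)$, a multiple of $k$, so it has a (unique) subgroup of order $k$; fix a generator $c_i$ of it and let $A=\langle c\rangle$ with $c$ acting as multiplication by $(c_1,\dots,c_n)$, a unit of order $k$. Semiregularity is the only point that is not immediate, and this is where the hypothesis really bites once $v$ is not squarefree: if $c^{j}\cdot g=g$ with $g\neq 0$, pick $i$ with $g_i\neq 0$ and write $g_i=p_i^{a}m$ with $0\le a<e_i$ and $p_i\nmid m$, so that $c_i^{j}\equiv 1\pmod{p_i^{e_i-a}}$. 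The key observation is that $c_i$ has order $k$, which is coprime to $p_i$ because $k\mid p_i-1$; since the reduction map $\Z_{p_i^{e_i}}^{*}\twoheadrightarrow\Z_{p_i^{e_i-a}}^{*}$ has kernel a $p_i$-group, it is injective on $\langle c_i\rangle$, so the image of $c_i$ still has order exactly $k$. Therefore $k\mid j$ and $c^{j}=\mathrm{id}$, which proves semiregularity; Theorem~\ref{Buratti} then yields the $(v,k,k-1)$-DDF in $\Z_v$, and its final clause the two $(v,k,(k-1)/2)$-DDFs when $vk$ is odd. I expect this last step --- checking semiregularity over $\Z_{p^e}$ with $e>1$, via the order of a $p'$-unit being preserved under reduction --- to be the only genuine content of the argument; everything else is routine bookkeeping.
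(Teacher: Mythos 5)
Your proof is correct and follows exactly the route the paper intends: Corollary~\ref{Furino} is stated as an immediate consequence of Theorem~\ref{Buratti}, applied to a multiplicative group of units of order $k$ acting semiregularly by multiplication, which is precisely the group you construct (for $R_v$ it coincides with the group $A$ appearing in the paper's explicit construction in Section~2.2). The only cosmetic remark is that your appeal to the cyclicity of $\Z_{p^e}^{*}$ tacitly assumes $p$ odd, which is harmless because $p=2$ together with $p\equiv 1\pmod k$ forces $k=1$.
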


 In \cite{BJ}, besides giving four good reasons to prefer PDFs rather than ZDBFs, we had to point out that despite the
 work done by the first author in \cite{B},  ZDBF-papers without anything new, except for tremendously involved proofs in a 
 different setting, continued to proliferate. Anyway, as the title of the present note says, {\it the storm has not yet passed}.

For instance, a very recent 20-page paper \cite{AMC} claims to present three new classes of PDFs.
However, one of these classes is a trivial consequence of Theorem \ref{Buratti}, the second class is a special case of Corollary \ref{Furino}, and the third construction is actually wrong. 
Moreover, the corrected version of their third class turns out to be a very special case of a well-known, rather trivial, construction in design theory.

In their introduction, the authors of \cite{AMC} comment that our note \cite{BJ} deserves full attention.
Nevertheless, apart from the terminology, they continue to apply {\it generalized cyclotomy} as in almost all papers dealing with ZDBFs. 
It is conceivable that this concept introduced in several papers such as \cite{Z} could have interesting applications. 
Unfortunately, none of the three applications given by the authors falls into this category.

In this note we will give a complete analysis of \cite{AMC}, with a twofold aim: to stop the flood of useless papers in this area,
and to encourage researchers to find new results on this interesting topic, after seriously studying the established literature on difference sets, difference families, and their related objects.

\section{Analysis of a recent paper on PDFs}

The three main results of the paper under investigation \cite{AMC}, displayed in its Table 1, can
be formulated as follows. 

\begin{quote}
{Result 1.} If $v$ is a product of prime powers all congruent to 1 $($mod $k(k+1))$, 
then there exists a $(v(k+1),k,k-1)$-DDF in $\Z_{k+1}\times R_v$.
\end{quote}

\begin{quote}
{Result 2.} If $k$ is odd and $v$ is a product of prime powers all congruent to 1 (mod $2k$), then there exists a
$(v,k,{k-1\over2})$-DDF in $R_v$.
\end{quote}

\begin{quote}
{Result 3.} For any prime power $q\equiv1$ $($mod $e)$, any $m\geq3$ coprime with $e$, and any $h$ in the
closed interval $[1,e]$, there exists a $(v,k,\lambda)$-DS in $\Z_{{q^m-1\over e}}\times \Z_h$ with 
\noindent\begin{center}
$v={(q^m-1)h\over e};\quad k={(q^{m-1}-1)h\over e};\quad \lambda={(q^{m-2}-1)h\over e}.$
\end{center}
\end{quote}

\subsection{Comments on Result 1}

Note that our statement of Result 1 arises from the one given by the authors by renaming $e-1$ as $k$.
Their hypotheses are much stronger than necessary, since ``(mod $k(k+1)$)" can be replaced by ``(mod $k$)"
and $\Z_{k+1}$ can be replaced by any group (even non-abelian) of order $k+1$. Also, the proof of this strengthened version is very elementary:
it is an immediate consequence of a considerably more general result easily deducible from the disjoint version of Theorem 4.1 in \cite{D}. 

Before stating and proving this result, we recall that a $(v,k,1)$ {\it difference matrix} (DM) in an additive group $H$ of order $v$ is a $k\times v$ matrix with elements in $H$ 
and the property that the difference of any two distinct rows is a permutation of $H$. 
It is called {\it homogeneous} (HDM) if each row is a permutation of $H$ as well. We note that adding a $0$-row to a $(v,k,1)$-HDM gives a $(v,k+1,1)$-DM. 
Conversely, any $(v,k+1,1)$-DM can be {\it normalized} as described in \cite{D2} to a $(v,k+1,1)$-DM with a $0$-row. Obviously, deleting this row  results in a $(v,k,1)$-HDM. 
Thus  a $(v,k,1)$-HDM is completely equivalent to a $(v,k+1,1)$-DM. 

\begin{thm}
If there exist a $(u,k,k-1)$-DDF in $G$, a $(v,k,k-1)$-DDF in $H$, and a $(v,k+1,1)$-DM, then there also exists a $(uv,k,k-1)$-DDF in $G\times H$.
\end{thm}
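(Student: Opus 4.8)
The plan is to combine the two families with the help of the matrix, in the spirit of the classical composition theorems for difference families. Write $\mathcal{F}_1$ for the $(u,k,k-1)$-DDF in $G$ and $\mathcal{F}_2$ for the $(v,k,k-1)$-DDF in $H$, and --- using the equivalence recalled just above --- replace the $(v,k+1,1)$-DM by a $(v,k,1)$-HDM, viewed as a $k\times v$ array $(m_{i,x})$ whose rows are indexed by $\{1,\dots,k\}$ and whose columns are indexed by the elements $x$ of $H$; thus each map $x\mapsto m_{i,x}$ is a permutation of $H$, and for $i\neq i'$ the map $x\mapsto m_{i,x}-m_{i',x}$ is a permutation of $H$ as well. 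I would also normalize $\mathcal{F}_1$ first: its blocks are $k$-subsets and there are $\frac{u-1}{k}$ of them, so together they cover all of $G$ except for exactly one element; translating every block of $\mathcal{F}_1$ by the negative of that element, I may assume $0\notin\bigcup\mathcal{F}_1$.

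Then I would define $\mathcal{F}$ on $G\times H$ as follows: for every $B=\{b_1,\dots,b_k\}\in\mathcal{F}_1$ and every $x\in H$ take the block $B\star x:=\{(b_i,m_{i,x}) : 1\le i\le k\}$, and for every $C\in\mathcal{F}_2$ take the block $\{0\}\times C$. The verification of the index splits accordingly. Inside $B\star x$ the differences are the pairs $(b_i-b_{i'},\,m_{i,x}-m_{i',x})$; fixing $B$ and an ordered pair $(i,i')$ with $i\neq i'$ and letting $x$ range over $H$, the HDM property makes the second coordinate range bijectively over $H$, so these differences are precisely $\{b_i-b_{i'}\}\times H$, each occurring once, while the pairs with $i=i'$ only produce the identity $(0,0)$. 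Summing over all $B\in\mathcal{F}_1$ and all ordered pairs $(i,i')$ and using that $\Delta\mathcal{F}_1$ covers every nonzero element of $G$ exactly $k-1$ times, the blocks $B\star x$ cover every $(g,h)$ with $g\neq 0$ exactly $k-1$ times and cover no nonzero element with first coordinate $0$. The blocks $\{0\}\times C$ contribute only differences $(0,c-c')$, and since $\Delta\mathcal{F}_2$ covers every nonzero element of $H$ exactly $k-1$ times, altogether $\mathcal{F}$ covers every nonzero element of $G\times H$ exactly $k-1$ times, as required.

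It remains to check that the blocks of $\mathcal{F}$ are pairwise disjoint. As $x$ runs over $H$, the $i$-th entry of $B\star x$ runs over $\{b_i\}\times H$ because row $i$ of the HDM is a permutation of $H$; hence $\bigcup_{x\in H}B\star x=B\times H$, and since the blocks of $\mathcal{F}_1$ are pairwise disjoint, the family $\{B\star x : B\in\mathcal{F}_1,\ x\in H\}$ consists of pairwise disjoint blocks whose union is $(\bigcup\mathcal{F}_1)\times H\subseteq(G\setminus\{0\})\times H$. On the other hand the blocks $\{0\}\times C$ are pairwise disjoint because $\mathcal{F}_2$ is a DDF, and they are contained in $\{0\}\times H$, which meets $(\bigcup\mathcal{F}_1)\times H$ trivially thanks to the normalization $0\notin\bigcup\mathcal{F}_1$. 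Therefore $\mathcal{F}$ is a $(uv,k,k-1)$-DDF in $G\times H$.

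The only point requiring a little care is precisely this last disjointness between the ``inflated'' blocks and the blocks coming from $\mathcal{F}_2$; the normalization step --- legitimate exactly because a $(u,k,k-1)$-DDF omits a single point of $G$ --- is what disposes of it, and everything else is routine bookkeeping with the defining properties of the HDM. As a sanity check, the total number of blocks produced is $\frac{(u-1)v}{k}+\frac{v-1}{k}=\frac{uv-1}{k}$, exactly the number of blocks a $(uv,k,k-1)$-DDF must have.
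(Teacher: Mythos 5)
Your proof is correct and follows essentially the same construction as the paper: inflate each block of the $G$-family along the columns of the homogeneous difference matrix, and place the $H$-family over the single uncovered element of $G$ (which you normalize to $0$ by a translation, while the paper simply uses the uncovered element $g$ directly). The only difference is that you carry out in full the verification the paper leaves as ``straightforward to check.''
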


\begin{proof}
Let ${\cal A}$ be a $(u,k,k-1)$-DDF in $G$, let ${\cal B}$ be a $(v,k,k-1)$-DDF in $H$, and let $M$ be a $(v,k,1)$-HDM in $H$ 
(which exists in view of its equivalence with a $(v,k+1,1)$-DM). For every $A=\{a_1,a_2,\dots,a_k\}\in{\cal A}$ and each $j\in \{1,\dots, v\}$, 
consider the $k$-subset $A_j$ of $G\times H$ defined by $A_j=\{(a_1,m_{1j}),(a_2,m_{2j}),\dots,(a_k,m_{kj})\}.$ 
Now let $g$ be the unique element of $G$ not covered by the blocks of ${\cal A}$ and set
$${\cal F}=\{A_j \ | \ A\in{\cal A}; 1\leq j\leq v\} \ \cup \ \{\{g\}\times B \ | \ B\in{\cal B}\}.$$
It is straightforward to check that ${\cal F}$ is the required $(uv,k,k-1)$-DDF in $G\times H$.
\end{proof}

In view of the above theorem, in order to obtain Result 1 it is enough to have the following ingredients:
\begin{itemize}
\item[(i)] a $(k+1,k,k-1)$-DDF in $\Z_{k+1}$;
\item[(ii)] a $(v,k,k-1)$-DDF in $R_v$;
\item[(iii)] a $(v,k+1,1)$-DM in $R_v$.
\end{itemize}

The first ingredient is the trivial difference set $\Z_{k+1}\setminus\{0\}$.
The second ingredient is given by Corollary \ref{Furino}.
The third ingredient is well-known. For instance, it is easily obtainable by combining Theorems 3 and  4 in \cite{D2}.

We note that the preceding proof immediately extends to the much stronger version of Result 1 mentioned above.
Also, all necessary ingredients can easily be written down explicitly, so the construction could be made as direct as desired.

\subsection{Comments on Result 2}

In their Remark 3, the authors of \cite{AMC} concede that the parameters of the DDF mentioned in Result 2 are not new, since they were already obtained in \cite{B} and \cite{Ge}. 
But, according to them, their construction has the advantage of being direct.
While it is true that the construction in \cite{Ge} is recursive, the one derivable from the proof of Theorem \ref{Buratti} given in \cite{B} is actually as direct as possible. 
We now show that the {\it explicit} construction can be presented in a dozen lines. Thus there is no need for several pages of generalized cyclotomy calculations.
As it is standard, $\F_q$ and $\F_q^*$ will denote the field of order $q$ and its multiplicative group. Also, $U(R_v)$ will denote the group of units of $R_v$.

\vspace{2mm}

\eject
\noindent
\emph{Explicit construction of the $(v,k,{k-1\over2})$-DDFs coming from Theorem \ref{Buratti}}.
 
Let $R_v=\F_{q_1}\times \dots \times \F_{q_t}$ with $q_i=2kn_i+1$. For $1\leq i\leq t$,
let $\omega_i$ be a primitive element of $\F_{q_i}$, and let $A$ be the subgroup of order $k$ of $U(R_v)$ 
generated by  $(\omega_1^{2n_1},\dots,\omega_t^{2n_t})$. For $1\leq i\leq t$, set $S_i=\{\omega_i^j \ | \ 1\leq j\leq n_i\}$.
The set ${\cal C}$ of the associate classes\footnote{Two elements $a$, $b$ of 
a ring with identity $R$ are {\it associates} if $b=au$ for a suitable unit $u$. Being associates is an equivalence relation whose equivalence classes
are called the {\it associate classes} of $R$.} of $R_v$ is  in one-to-one correspondence with the power-set of $\{1,\dots,t\}$: the members of ${\cal C}$
are precisely all sets of the form  $C=C_1 \times \dots \times C_t$ where each factor $C_i$ is either $\{0\}$ or $\F_{q_i}^*$. For every non-zero $C\in{\cal C}$, choose a non-null factor $C_i$ of it,
and let $\sigma(C)$ be the subset of $C$ of size ${|C|\over 2k}$ obtained from $C$ itself by replacing the chosen $C_i$ with the set $S_i$. 
Set $X=\displaystyle\bigcup_{C\in{\cal C}^*}\sigma(C)$ where ${\cal C}^*$ is the set of non-zero classes of ${\cal C}$. 
Then ${\cal F}=\{xA \ | \ x\in X\}$ is the desired $(v,k,{k-1\over2})$-DDF.

\bigskip\noindent
Example. Let $v=1729$ and $k=3$. We have $R_v=\F_{q_1}\times\F_{q_2}\times\F_{q_3}$ with $(q_1,q_2,q_3)=(7,13,19)$.  Thus
$q_i=2kn_i+1$ with $(n_1,n_2,n_3)=(1,2,3)$. Take the primitive elements $\omega_i$ as follows: $(\omega_1,\omega_2,\omega_3)=(3,2,2)$.
Then $A$ is the group of units of order 3 generated by the triple $(3^2,2^4,2^6)=(2,3,7)$, i.e., $A=\{(1,1,1),(2,3,7),(4,9,11)\}$.
The non-zero associate classes of $R_v$ are:
$$\F_7^*\times\{0\}\times\{0\};\quad  \{0\}\times\F_{13}^*\times\{0\};\quad \{0\}\times\{0\}\times\F_{19}^*;$$
$$\F_7^*\times\F_{13}^*\times\{0\};\quad  \F_7^*\times\{0\}\times\F_{19}^*;\quad \{0\}\times\F_{13}^*\times\F_{19}^*;$$ 
$$U(R_v)=\F_7^*\times\F_{13}^*\times\F_{19}^*.$$ 
Following the instructions of the proof given above, the related $\sigma(C)$ can be taken as follows:

$\{3\}\times\{0\}\times\{0\}=\{(3,0,0)\};$

$\{0\}\times\{(2,4)\}\times\{0\}=\{(0,2,0),(0,4,0)\};$

$\{0\}\times\{0\}\times\{2,4,8\}=\{(0,0,2),(0,0,4),(0,0,8)\};$

$\{3\}\times\F_{13}^*\times\{0\}=\{(3,i,0) \ | \ 1\leq i\leq 12\};$

$\{3\}\times\{0\}\times\F_{19}^*=\{(3,0,i) \ | \ 1\leq i\leq 18\}$;

$\{0\}\times\{2,4\}\times\F_{19}^*=\{(0,i,j) \ | \ i=2,4; 1\leq j\leq 18\};$

$\{3\}\times\F_{13}^*\times\F_{19}^*=\{(3,i,j) \ | \ 1\leq i\leq 12; 1\leq j\leq 18\}.$

If $X$ is the union of the above sets, then $\{x A \ | \ x\in X\}$ is a $(1729,3,1)$-DDF in $R_{1729}$.

It is worth noticing that there is a huge number of other possible choices for $X$. Only the method described above gives 24 possibilities
since we may change the choices of some related $\sigma(C)$. For instance, if $C$ is the fourth associate class 
$\F_7^*\times\F_{13}^*\times\{0\}$, as related $\sigma(C)$ we could also take $\{(i,j,0) \ | \ 1\leq i\leq 6; j=2,4\}$.

\subsection{Comments on Result 3}

In \cite{AMC}, Result 3 is stated in terms of a PDF with just one non-singleton block. 
This realizes what we have privately been fearing for some time: difference sets are translated to PDFs or ZDBFs. 
We strongly discourage to use this approach, as it is not helpful for the study of difference sets.
Research on these objects has a long tradition, and the extensive literature on difference sets contains a substantial number of deep results.
To get an idea of the complexity and wealth of the subject which had already been reached by 1999, 
the reader might have a look at the 170 page chapter on difference sets in \cite{BJL}. Needless to say, there has been considerable further progress since then.

Apart from these general reservations, Result 3 is unfortunately wrong. It actually holds only for the special case $(e,h)=(q-1,1)$,
which corresponds to the class of {\it Singer} difference sets -- probably the best known and truly classical examples of difference sets.
On the other hand, it is easy to see that it fails for all other pairs $(e,h)$. Assume otherwise.
Then we would have two difference sets whose parameter triples $(v,k,\lambda)$ and $(v\mu,k\mu,\lambda\mu)$ 
are distinct and proportional\footnote{One triple is $(v,k,\lambda)$ with $v={q^m-1\over q-1}$,  $k={q^{m-1}-1\over q-1}$, $\lambda={q^{m-2}-1\over q-1}$,
and the other triple is $(v,k,\lambda)$ multiplied by $\mu={h(q-1)\over e}$.}.
Let us show that this is never possible unless $(v,k,\lambda)$ is the trivial triple $(k,k,k)$. 

If $D$ is a $(v,k,\lambda)$-DS and $D'$ is a $(v\mu,k\mu,\lambda\mu)$-DS, then the trivial identities (obtained by counting differences in two ways)
$$\lambda(v-1)=k(k-1),\quad \lambda\mu(v\mu-1)=k\mu(k\mu-1)$$
hold. Dividing the second identity by $\mu$ and then dividing the first identity by the second, 
we obtain ${v-1\over v\mu-1}={k-1\over k\mu-1}$ which gives $(v-1)(k\mu-1)-(v\mu-1)(k-1)=0$.
Expanding the left hand side, we finally have $(v-k)(\mu-1)=0$ which means that either $v=k$, that is $(v,k,\lambda)=(k,k,k)$, or $\mu=1$.
(Of course, the same argument applies to symmetric designs in general.)

\vspace{1pt}
As an illustration of their Result 3, the authors of \cite{AMC} consider the case $q=m=4$, $e=3$ and $h=2$, see their Example 3.10.
They claim that this gives a $(170,42,10)$-DS, and they explicitly display a 42-subset $D$ of $\Z_{170} \cong  \Z_{85}\times\Z_2$,
inviting the reader to check its correctness by using a computer. 
However, the list $\Delta D$ of differences from $D$ actually covers every element of $\Z_{170} \setminus\{0,85\}$ exactly 10 times,
whereas the involution $85$ appears precisely 42 times as a difference.

It is really a pity that they did not realize that the triple $(170,42,10)$ is not admissible; in that case, they would have discovered this awful mistake. 
Actually, one does not even need a computer check to see that the example presented is not correct: 
just looking at the set $D$,  one quickly notices that it has the form $A \cup (A+85)$, where $A$ is a $21$-subset of $\Z_{85}$,
which immediately explains why $85$ appears precisely 42 times as a difference.

\section{Some remarks on divisible difference sets}

In this  final section, we wish to point out that it is easy enough, even if only marginally interesting, to give a corrected version of Result 3.
The incorrect specific Example 3.10 in \cite{AMC} in reality gives a divisible difference set (DDS) with parameters
$$m=85,\; n=2,\; k = \lambda_1 = 42 \; \mbox{and}\; \lambda_2 = 10$$
in $\Z_{170}$. Let us recall the definition of these objects.

A \emph{divisible difference set} with parameters $m$, $n$, $k$, $\lambda_1$ and $\lambda_2$ (for short, an $(m,n,k,\lambda_1,\lambda_2)$-DDS) 
in an (additively written) group $G$ of order $mn$ relative to a normal subgroup $N$ of order $n$ is a $k$-subset $D$ of $G$ 
such that every element $g \in G \setminus N$ has exactly $\lambda_2$ representations as a difference $g = d-d'$ with $d,d' \in D$,
whereas  every element $g\neq 0$ in $N$ has exactly $\lambda_1$ such representations. 
Again, such objects have been studied quite extensively, with particular emphasis on \emph{relative difference sets}, that is, the special case $\lambda_1=0$.

We suggest the old paper \cite{D3} by the second author as an extended introduction to various aspects of this area.
In particular, it should be noted that divisible difference sets correspond to square divisible designs with a Singer group, 
just as ordinary difference sets correspond to symmetric designs with a Singer group.
Using the DDS terminology, the corrected version of Result 3  reads as follows:
\begin{quote}
{Result 3*.} For any prime power $q\equiv1$ $($mod $e)$, any $d \geq3$ coprime with $e$, and any $h$ in the
closed interval $[1,e]$, there exists an $(m,n,k,\lambda_1,\lambda_2)$-DDS in $\Z_{{q^d-1\over e}}\times \Z_h$ with 
\noindent\begin{center}
$m={q^d-1\over q-1};\;\; n={h(q-1)\over e}; \;\; k= \lambda_1 = {h(q^{d-1}-1)\over e}, \;\; \lambda_2={h(q^{d-2}-1)\over e}.$
\end{center}
\end{quote}
Again, the use of generalized cyclotomy just leads to a lot of entirely superfluous restrictions, while simultaneously obscuring the true reason behind the result. 
In fact, Result 3* is an extremely special case of the following simple observation (applied to the classical Singer difference sets).

\begin{thm}\label{DDS}
If there exists an $(m,k,\lambda)$-DS in $G$, then there also exists an $(m,h,kh,kh,\lambda h)$-DDS in $G\times H$,
where $H$ may be any (even non-abelian) group of order $h$.
\end{thm}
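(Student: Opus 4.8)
The plan is to write down the DDS directly. Let $D$ be the given $(m,k,\lambda)$-DS in $G$, and put $D' = D \times H \subseteq G \times H$, taking as forbidden subgroup $N = \{0\} \times H$. First I would dispose of the bookkeeping: $G \times H$ has order $mh$, the subgroup $N$ is normal and has order $h$, and $|D'| = |D|\cdot|H| = kh$. So the first three parameters of the putative DDS are indeed $m$, $n = h$ and $k' = kh$, and it remains only to verify the two ``difference'' conditions relative to $N$.

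Next I would compute the difference list of $D'$. Since $G$ is abelian, the product-group difference of $(d_1,x_1)$ and $(d_2,x_2)$ is $(d_1-d_2,\,x_1x_2^{-1})$, so the number of representations of a fixed element $(a,y)\in G\times H$ as a difference from $D'$ equals the number of pairs $(d_1,d_2)\in D\times D$ with $d_1-d_2=a$, multiplied by the number of pairs $(x_1,x_2)\in H\times H$ with $x_1x_2^{-1}=y$. The second factor is always exactly $h$: choose $x_2$ freely, and then $x_1$ is forced. The first factor equals $\lambda$ when $a\neq 0$, by definition of the difference set $D$, and equals $k$ when $a=0$, the contributions in that case coming only from the $k$ diagonal pairs $(d,d)$ with $d\in D$.

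Combining the two cases completes the argument. An element $(a,y)$ lies in $N$ exactly when $a=0$; a nonzero such element therefore has $k\cdot h = kh$ representations, giving $\lambda_1 = kh$, while an element with $a\neq 0$, i.e.\ lying outside $N$, has $\lambda\cdot h = \lambda h$ representations, giving $\lambda_2 = \lambda h$. Hence $D'$ is an $(m,h,kh,kh,\lambda h)$-DDS in $G\times H$ relative to $N$, as claimed.

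I do not expect a genuine obstacle here: the construction is just ``inflate $D$ by $H$'', and the verification is a two-line case split. The only points deserving a moment's attention are that $N$ is honestly normal (which is precisely why $H$ is allowed to be non-abelian) and the separate treatment of the diagonal, which is exactly what forces $\lambda_1$ to equal the full block size $k'=kh$. As a consistency check, the standard counting identity $k'(k'-1) = \lambda_1(n-1) + \lambda_2 n(m-1)$ for an $(m,h,kh,kh,\lambda h)$-DDS collapses, upon substituting $k(k-1)=\lambda(m-1)$ for $D$, to a triviality; and for $h=1$ one recovers $D$ itself, since an $(m,1,k,k,\lambda)$-DDS is nothing but an $(m,k,\lambda)$-DS.
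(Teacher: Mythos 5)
Your construction $D' = D\times H$ with forbidden subgroup $N=\{0\}\times H$ is exactly the one in the paper, which simply declares the verification ``straightforward''; your proposal correctly carries out that check, including the diagonal count that forces $\lambda_1=kh$. Nothing to add.
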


\begin{proof}
Let S be any $(m,k,\lambda)$-difference set in $G$ and put $D := S \times H$.  It is straightforward to check 
(either directly, or via a short group ring computation) that $D$ is the desired DDS  in $G\times H$. 
\end{proof}

Actually, Theorem \ref{DDS} is not really interesting, even if it should be new, as it is just the difference set version of an old result on divisible designs.
In general,  divisible designs with $r = \lambda_1$ (that is, $k = \lambda_1$ for the square case in which we are) are said to be \emph{singular}.
A 1952 result by Bose and Connor \cite{BC} states that all singular divisible designs are equivalent to 2-designs, with each point taken $n$ times;
in the difference set setting, this  translates into the construction just presented. 
Perhaps this has never been stated explicitly, because singular divisible designs are usually considered to be trivial, but it is at least folklore.
We note that virtually every paper on divisible designs immediately excludes the singular case, for obvious reasons.

\vspace{1mm}
Of course,  divisible difference sets are not  pertinent to the topic of PDFs and ZDBFs anyway, as they have two distinct $\lambda$-values.
Let us express our fervent hope that no new research trend will be started by using generalized cyclotomy to construct ``PDFs (or ZDBFs) with more than one $\lambda$-value''.
Unfortunately, in view of our experiences over the past years, we cannot  be entirely confident about this.

\end{document}